\newlist{steps}{enumerate}{1}
\setlist[steps, 1]{label = Step \arabic*:}
\newcommand{\gd}{\Delta}
\newcommand{\inpt}[1]{\langle #1 \rangle}
\newcommand{\gw}{\Omega}
\newcommand{\ap}{\alpha}
\newcommand{\gb}{\beta}
\newcommand{\G}{\Gamma}
\newcommand{\ms}{\mathscr}
\newcommand{\nb}{\nabla}
\newcommand{\pdr}{\partial}
\newcommand{\beq}{\begin{equation}}
\newcommand{\eeq}{\end{equation}}
\newcommand{\bea}{\begin{align}}
\newcommand{\eea}{\end{align}}
\newcommand{\bthm}{\begin{theorem}}
\newcommand{\ethm}{\end{theorem}}
\newcommand{\bpr}{\begin{proof}}
\newcommand{\epr}{\end{proof}}
\newcommand{\bcl}{\begin{corollary}}
\newcommand{\ecl}{\end{corollary}}
\newcommand{\bpn}{\begin{proposition}}
\newcommand{\epn}{\end{proposition}}
\newcommand{\bre}{\begin{remark}}
\newcommand{\ere}{\end{remark}}
\newcommand{\bdf}{\begin{definition}}
\newcommand{\edf}{\end{definition}}
\newcommand{\bss}{\begin{align*}}
\newcommand{\ess}{\end{align*}}
\newcommand{\bl}{\label}
\newcommand{\mR}{\mathbb{R}}
\newcommand{\mH}{\mathbb{H}}
\newcommand{\mE}{\mathbb{E}}
\newtheorem{theorem}{Theorem}[section]
\newtheorem{corollary}[theorem]{Corollary}
\newtheorem{proposition}[theorem]{Proposition}
\theoremstyle{definition}
\newtheorem{definition}[theorem]{Definition}
\theoremstyle{remark}
\newtheorem{remark}{Remark}
\numberwithin{equation}{section}
\begin{document}

\title[Synchronization of Neuron Network]{Synchronization of Boundary Coupled Hindmarsh-Rose Neuron Network}

\author[C. Phan]{Chi Phan}
\address{Department of Mathematics and Statistics, University of South Florida, Tampa, FL 33620, USA}
\email{chi@mail.usf.edu}
\thanks{}

\author[Y. You]{Yuncheng You}
\address{Department of Mathematics and Statistics, University of South Florida, Tampa, FL 33620, USA}
\email{you@mail.usf.edu}
\thanks{}

\subjclass[2010]{35B40, 35B45, 35K58, 35M33, 35Q92, 92B25, 92C20}

\date{December 10, 2019}


\keywords{Neuron network, synchronization with boundary coupling, Hindmarsh-Rose equations, absorbing semiflow.}

\begin{abstract} 
In this work, we present a new mathematical model of a boundary coupled neuron network described by the partly diffusive Hindmarsh-Rose equations. We prove the global absorbing property of the solution semiflow and then the main result on the asymptotic synchronization of this neuron network at a uniform exponential rate provided that the boundary coupling strength and the stimulating signal exceed a quantified threshold in terms of the parameters.
\end{abstract}

\maketitle

Synchronization of biological neurons is one of the central topics in neuroscience. Here we shall present a new mathematical model of multiple Hindmarsh-Rose neurons with boundary coupling, which yields the asymptotic synchronization at a uniform exponential rate independent of initial states.

The original Hindmarsh-Rose equations for single neuron firing-bursting were proposed in \cite{HR}. That neuron model of ordinary differential equations has been studied through numerical simulations and bifurcation analysis, cf. \cite{DJ, IG, EI, ET, MFL, SPH, Tr, WS, Su} and the references therein. Hodgkin-Huxley equations \cite{HH} and FitzHugh-Nagumo equations \cite{FH} also provided mathematical models for single neuron dynamics.

Neuronal signals are short electrical pulses called spikes or action potential. Neurons often exhibit bursts of alternating phases of rapid firing spikes and then quiescence. Bursting patterns modulates the brain functionalities and are experimentally observed in many bio-systems, cf. \cite{BRS, CK,CS, HR}. Synchronization of bursting for multiple neurons plays a key role in an effective execution of the command from central nerve system.

In recent work \cite{Phan, PY, PYS, PYY}, the authors studied the single neuron model of diffusive Hindmarsh-Rose equations and proved the existence of global attractor and pullback random attractor for the solution semiflow and the solution cocycle. 

Mathematical testimony for synchronization of coupled neurons by a hybrid model of partly diffusive partial-ordinary differential equations is an open problem. The new model of boundary coupled neuron network presented in this paper reflects the structural feature of neuron cells, especially the short-branch dendrites receiving incoming signals and the long-branch axon propagating outreaching signals as well as that neurons are immersed in aqueous biochemical solutions with charged ions. 

\section{\textbf{New Model of Boundary Coupled Neuron Network}}

In this paper, we present a new model of boundary coupled neuron network in terms of the following system of the partly diffusive Hindmarsh-Rose equations,
\beq \bl{cHR}
\begin{split}
	\frac{\pdr u}{\pdr t} & = d \gd u +  au^2 - bu^3 + v - w + J,  \\
	\frac{\pdr v}{\pdr t} & =  \alpha - v - \beta u^2,    \\
	\frac{\pdr w}{\pdr t} & = q (u - c) - rw,   \\
	\frac{\pdr u_i}{\pdr t} & = d \gd u_i + au_i^2 - bu_i^3 + v_i - w_i + J,  \quad 1 \leq i \leq m, \\
	\frac{\pdr v_i}{\pdr t} & =  \alpha - v_i - \beta u_i^2,  \quad 1 \leq i \leq m,  \\
	\frac{\pdr w_i}{\pdr t} & = q (u_i - c) - rw_i, \quad 1 \leq i \leq m,
\end{split}
\eeq
for $t > 0,\; x \in \gw \subset \mathbb{R}^{n}$ ($n \leq 3$), where $\gw$ is a bounded domain and its boundary 
$$
	\partial \gw = \G = \bigcap_{i = 0}^m \G_i
$$
is locally Lipschitz continuous, where the boundary pieces $\G_i, i = 0, 1, \cdots, m,$ are measurable and mutually non-overlapping. Here $(u_i, v_i, w_i), \,i = 1, \cdots, m,$ are the state variables for the \emph{neighbor neurons} denoted by $\mathcal{N}_i, i = 1, \cdots, m$, coupled with the \emph{central neuron} denoted by $\mathcal{N}_c$ whose state variables are $(u, v, w)$. 

In this system \eqref{cHR}, the variables $u(t, x)$ and $u_i(t,x)$ refer to the membrane electrical potential of a neuron cell, the variables $v(t, x)$ and $v_i(t, x)$ called the spiking variables represent the transport rate of the ions of sodium and potassium through the fast ion channels, and the variables $w(t, x)$ and $w_i(t, x)$ called the bursting variables represent the transport rate across the neuron cell membrane through slow channels of calcium and other ions.

The coupling boundary conditions affiliated with the system \eqref{cHR} are given by
\beq \label{nbc}
	\begin{split}
	&\frac{\pdr u}{\pdr \nu} (t, x) = 0, \;\; \text{for} \; x \in \G_0,  \;\;  \frac{\pdr u}{\pdr \nu} (t, x) + pu = pu_i, \;\, \text{for} \; x \in \G_i, \; 1 \leq i \leq m;  \\
	&\frac{\pdr u_i}{\pdr \nu} (t, x) = 0, \;\, \text{for} \; x \in \G \backslash \G_i,  \;\, \frac{\pdr u_i}{\pdr \nu} (t, x) + p u_i = pu, \;\, \text{for}  \; x \in \G_i, \; 1 \leq i \leq m.
	\end{split}
\eeq
where $\pdr/\pdr \nu$ stands for the normal outward derivative, $p > 0$ is the coupling strength constant and the switch functions of the neuron $\mathcal{N}_i, 1 \leq i \leq m$, are 
$$
	\xi_i (x) = \begin{cases}
		1,  \vspace{5pt} & \text{if  \, $x \in \G_i$}; \\[3pt]
		0,  \vspace{5pt} & \text{if  \, $x \in \G \backslash \G_i$}.
	\end{cases}
$$ 
The initial conditions to be specified are denoted by 
\beq \bl{inc}
	\begin{split}
	&u(0, x) = u^0 (x), \;\quad v(0, x) = v^0 (x), \quad \, w(0, x) = w^0 (x), \quad  \; x \in \gw, \\[3pt]
	&u_i(0, x) = u_i^0 (x), \quad v_i(0, x) = v_i^0 (x), \quad w_i (0, x) = w_i^0 (x), \quad x \in \gw,
	\end{split}
\eeq
for $1 \leq i \leq m$.

All the parameters in this system \eqref{cHR} including the input electrical current $J$ are positive constants except a reference value of the membrane potential of neuron cells $c = u_R \in \mR$.

In this study of the neuron network \eqref{cHR}-\eqref{inc}, we shall work with the following Hilbert spaces for the subsystem of three equations for each involved single neuron:
$$
	H = L^2 (\gw, \mR^3), \quad  \text{and} \quad E = H^1 (\gw) \times L^2 (\gw, \mR^2).
$$
Also define the product spaces 
$$
	\mathbb{H} =  [L^2 (\gw, \mR^3)]^{1+m} \quad \text{and} \quad \mathbb{E} = [H^1 (\gw) \times L^2 (\gw, \mR^2)]^{1+m}
$$
for the entire system \eqref{cHR}-\eqref{inc}. The norm and inner-product of the Hilbert space $\mathbb{H}, \, H$ or $L^2(\gw)$ will be denoted by $\| \, \cdot \, \|$ and $\inpt{\,\cdot , \cdot\,}$, respectively. The norm of $\mathbb{E}$ or $E$ will be denoted by $\| \, \cdot \, \|_E$. We use $| \, \cdot \, |$ to denote the vector norm or the measure of set in $\mR^n$.

The initial-boundary value problem \eqref{cHR}-\eqref{inc} can be formulated as an initial value problem of the evolutionary equation:
\begin{equation} \label{pb}
\begin{split}
	\frac{\partial}{\partial t} \begin{pmatrix}
	g\\
	g_i
	\end{pmatrix}
	=  \begin{pmatrix}
	A & 0 \\
	0 & A_i
	\end{pmatrix} &
	 \begin{pmatrix}
	g\\
	g_i
	\end{pmatrix}
	+  \begin{pmatrix}
	 f (g) \\
	 f (g_i) 
	\end{pmatrix}, \;\;  1 \leq i \leq m, \;\; t > 0, \\[4pt]
	&g(0) = g^0 \in H, \quad g_i (0) = g_i^0 \in H.
\end{split}
\end{equation}
Here $g(t) = \text{col}\, (u(t, \cdot), v(t, \cdot ), w(t, \cdot))$ and $g_i(t) = (u_i(t, \cdot), v_i (t, \cdot ), w_i(t, \cdot))$. The initial data functions are $g^0 = \text{col}\,(u^0, v^0, w^0)$ and $g_i^0 = \text{col}\, (u_i^0, v_i^0, w_2^0),$ for $1 \leq i \leq m$. The nonpositive, self-adjoint and diagonal operator $\mathcal{A} = \text{diag}\, (A, A_1, \cdots, A_m)$ is defined by the block operators
\begin{equation} \label{opA}
	A = A_i =
\begin{bmatrix}
d \gd \quad & 0  \quad & 0 \\[3pt]
0 \quad & - I \quad  & 0 \\[3pt]
0 \quad & 0 \quad & - r I 
\end{bmatrix}, \quad 1 \leq i \leq m,
\end{equation}
with the domain
$$
	D(\mathcal{A}) = \{ \text{col}\, (h, h_1, \cdots, h_m) \in [H^2(\gw) \times L^2 (\gw, \mR^2)]^{1+m}: \text{\eqref{nbc} satisfied}\}.
$$ 
Due to the continuous Sobolev imbedding $H^{1}(\gw) \hookrightarrow L^6(\gw)$ for space dimension $n \leq 3$ and by the H\"{o}lder inequality, the nonlinear mapping 
\begin{equation} \label{opf}
\begin{pmatrix}
	f(g) \\[3pt]
	f(g_i)
\end{pmatrix} =
\begin{pmatrix}
au_1^2 - bu_1^3 + v_1 - w_1 + J \\[3pt]
\alpha - \beta u_1^2  \\[3pt]
q (u_1 - c) \\[3pt]
au_2^2 - bu_2^3 + v_2 - w_2 + J \\[3pt]
\alpha - \beta u_2^2  \\[3pt]
q (u_2 - c)
\end{pmatrix}
: E \times E \longrightarrow H \times H
\end{equation}
is a locally Lipschitz continuous mapping for $1 \leq i \leq m$. 

We shall consider the weak solution of this initial value problem \eqref{pb}, cf. \cite[Section XV.3]{CV} and the corresponding definition presented in \cite{PY, PYS}. The following proposition can be proved by the Galerkin approximation method.

\begin{proposition} \label{pps}
	For any given initial state $(g^0, g_1^0, \cdots , g_m^0) \in \mathbb{H}$, there exists a unique local weak solution $(g(t, g^0), g_1 (t, g_1^0), \cdots, g_m (t, g_m^0)), \, t \in [0, \tau]$, for some $\tau > 0$, of the initial value problem \eqref{pb} formulated from the problem \eqref{cHR}-\eqref{inc}. The weak solution continuously depends on the initial data and satisfies 
	\begin{equation} \label{soln}
	(g, g_1, \cdots, g_m)  \in C([0, \tau]; \,\mH) \cap C^1 ((0, \tau); \,\mH) \cap L^2 ([0, \tau]; \,\mE).
	\end{equation}
If the initial state is in $\mE$, then the solution is a strong solution with the regularity
	\begin{equation} \bl{ss}
	(g, g_1, \cdots, g_m) \in C([0, \tau]; \,\mE) \cap C^1 ((0, \tau); \,\mE) \cap L^2 ([0, \tau]; \,D(A) \times D(A_i)^m).
	\end{equation}
\end{proposition}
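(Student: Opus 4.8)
The plan is to construct the solution by the Galerkin approximation and compactness scheme used for the single-neuron model in \cite{PY, PYS}, the only structurally new ingredient being the boundary coupling \eqref{nbc}. First I would identify the variational structure of the operator $\msa$ in \eqref{pb}: integrating $-\inpt{\msa(g,g_i),(h,h_i)}$ by parts and inserting \eqref{nbc}, the coupling produces the symmetric surface term $dp\sum_{i=1}^m\int_{\G_i}(u-u_i)(h-h_i)\,dS$, which is nonnegative on the diagonal and, by the trace inequality $H^1(\gw)\hookrightarrow L^2(\G)$, bounded on $\mE$. Hence $-\msa$ is the operator associated with a bounded, symmetric bilinear form on $\mE$ satisfying a G{\aa}rding (coercivity-up-to-a-shift) inequality, so $\msa$ generates an analytic $C_0$-semigroup on $\mH$ and the weak solution of \eqref{pb} is well defined in the sense of \cite[Section XV.3]{CV}. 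Picking an orthonormal basis $\{e_k\}_{k\ge1}$ of $\mH$ lying in $D(\msa)$ and total in $\mE$, I would project \eqref{pb} onto $\mH_N=\Span\{e_1,\dots,e_N\}$; since the nonlinear map $f$ in \eqref{opf} is locally Lipschitz, the resulting finite system of ODEs for the coefficients has a unique solution $(g^N,g_i^N)$ on a maximal interval $[0,\tau_N)$.

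The core of the argument is a uniform a priori bound, obtained by testing the Galerkin system with $(g^N,g_i^N)$ in $\mH$. Three structural features make it close: the cubic term is strongly dissipative, $\inpt{-bu^3,u}=-b\|u\|_{L^4(\gw)}^4$, which after Young's inequality absorbs $\inpt{au^2,u}$ as well as the cross terms $\inpt{-\gb u^2,v}$ and $\inpt{qu,w}$ arising from the $v$- and $w$-equations; the linear couplings $v-w$, $q(u-c)-rw$ and the constants $J,\ap$ are absorbed by the dissipation $-\|v\|^2$, $-r\|w\|^2$; and, decisively, when the $u$-equation tested by $u$ is added to the $u_i$-equations tested by $u_i$ the boundary terms collapse to $-dp\sum_{i=1}^m\int_{\G_i}(u-u_i)^2\,dS\le0$. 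This yields the differential inequality
\beq \bl{apr}
	\frac{d}{dt}\,\|(g^N,g_i^N)\|^2 + C_1\,\|(g^N,g_i^N)\|_E^2 + b\int_\gw\Big((u^N)^4+\sum_{i=1}^m(u_i^N)^4\Big)dx \le C_2 ,
\eeq
with $C_1,C_2$ independent of $N$; integrating \eqref{apr} over $[0,t]$ shows that $(g^N,g_i^N)$ is bounded, uniformly in $N$ and on every finite interval $[0,\tau]$ (so in fact $\tau_N=\infty$), in $L^\infty(0,\tau;\mH)\cap L^2(0,\tau;\mE)$, with its diffusive components bounded moreover in $L^4((0,\tau)\times\gw)$. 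Feeding this back into the equations and using the Sobolev embedding $H^1(\gw)\hookrightarrow L^6(\gw)$ for $n\le3$, the time derivatives $\pdr_t(g^N,g_i^N)$ are bounded in $L^{4/3}(0,\tau;\mE^{*})$.

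By weak-$*$ and weak compactness I extract a subsequence with $(g^N,g_i^N)\rightharpoonup(g,g_i)$ in $L^\infty(0,\tau;\mH)$ and in $L^2(0,\tau;\mE)$; by the Aubin--Lions lemma, using the bound on the time derivative, the convergence is strong in $L^2(0,\tau;\mH)$ and in $L^2(0,\tau;H^s(\gw))$ for $1/2<s<1$ in the diffusive components, hence a.e. on $(0,\tau)\times\gw$ along a further subsequence and, by the trace theorem, strongly in $L^2(0,\tau;L^2(\G))$ on $\G$. Since $(u^N)^2$ and $(u^N)^3$ are bounded in $L^2$ and in $L^{4/3}$ of $(0,\tau)\times\gw$ respectively and converge a.e., the standard Lions lemma gives $(u^N)^2\rightharpoonup u^2$ and $(u^N)^3\rightharpoonup u^3$ weakly, so one passes to the limit in \eqref{pb} and in the surface integrals of the weak formulation; thus $(g,g_i)$ is a weak solution on $[0,\tau]$ with the regularity \eqref{soln}. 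Membership in $C([0,\tau];\mH)$ follows from the Lions--Magenes interpolation together with the resulting energy identity, and $C^1((0,\tau);\mH)$ follows from analytic smoothing of the diffusive components combined with the pointwise linear-ODE structure of the $v$- and $w$-components; continuous dependence on the initial data will be a by-product of the difference estimate below.

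For uniqueness and continuous dependence I would estimate in $\mH$ the difference $(G,G_i)=(g,g_i)-(\bar g,\bar g_i)$ of two solutions. The estimate closes because the cubic difference has the favourable sign, $\inpt{-b(u^3-\bar u^3),u-\bar u}=-b\int_\gw(u^2+u\bar u+\bar u^2)(u-\bar u)^2\,dx\le0$, the boundary terms again cancel into $-dp\sum_i\int_{\G_i}\big((u-u_i)-(\bar u-\bar u_i)\big)^2\,dS\le0$, and the remaining quadratic differences such as $a\int_\gw(u+\bar u)(u-\bar u)^2\,dx$ are, by H\"older combined with the Gagliardo--Nirenberg inequality $\|\varphi\|_{L^{12/5}(\gw)}\le C\|\varphi\|^{3/4}\|\varphi\|_{H^1}^{1/4}$ valid for $n\le3$, dominated by $\ve\|u-\bar u\|_{H^1}^2 + C(\|u\|_{H^1}+\|\bar u\|_{H^1})^{4/3}\|u-\bar u\|^2$ whose coefficient belongs to $L^1(0,\tau)$; Gr\"onwall's inequality then finishes the estimate. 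Finally, for $g^0\in\mE$ I would run the Galerkin scheme with the higher-order test element $-\msa(g^N,g_i^N)$ (equivalently, differentiate in $t$ and test with $\pdr_t(g^N,g_i^N)$) and use $\|f(g^N)\|^2\le C(1+\|(g^N,g_i^N)\|_E^2)^3$; comparison with the scalar inequality $y'\le C(1+y)^3$ then gives a bound in $L^\infty(0,\tau;\mE)\cap L^2([0,\tau];D(A)\times D(A_i)^m)$ on a possibly smaller interval, whence $\pdr_t(g,g_i)\in L^2([0,\tau];\mH)$ from the equation and the solution is upgraded to \eqref{ss}. I expect the genuine obstacle throughout to be the boundary coupling: one must verify the sign-definite cancellation in both the basic and the difference estimates, and in the higher-order estimate one must control surface integrals of the form $\int_{\G_i}u^3\,\pdr_\nu u\,dS=p\int_{\G_i}u^3(u_i-u)\,dS$, which forces the use of the $H^1(\gw)\hookrightarrow L^4(\G)$ boundary trace interpolated against $\|\gd u\|$ — precisely the point where the hybrid PDE/ODE structure of \eqref{cHR} and the restriction $n\le3$ are used essentially.
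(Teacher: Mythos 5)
Your proposal is correct and takes the same route the paper intends: the paper gives no written proof of this proposition, stating only that it can be proved by the Galerkin approximation method (with reference to \cite[Section XV.3]{CV} and \cite{PY,PYS}), and your Galerkin/energy-estimate/Aubin--Lions/difference-estimate scheme, with the boundary coupling correctly reduced to the nonnegative surface term $dp\sum_{i=1}^m\int_{\G_i}(u-u_i)^2\,dS$ in both the basic and the uniqueness estimates, supplies exactly those details. One small adjustment: to get your a priori inequality with a constant right-hand side you should test the $u$-equations with a weighted multiplier $C_1 u$, $C_1=\frac{1}{b}(\beta^2+4)$, as the paper does in Theorem \ref{Tm}, since with unit weight the Young splitting of the term $\beta u^2 v$ leaves a multiple of $\|v\|^2$ that need not be absorbed by $-\|v\|^2$ for general parameters; for the purely local statement a Gr\"onwall-type bound with $C\left(1+\|(g,g_1,\cdots,g_m)\|^2\right)$ on the right is equally sufficient.
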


The basics of infinite dimensional dynamical systems or called semiflow generated by parabolic partial differential equations are referred to \cite{CV, SY, Tm}.

\begin{definition} \label{Dabsb}
	Let $\{S(t)\}_{t \geq 0}$ be a semiflow on a Banach space $\ms{X}$. A bounded set $B^*$ of $\ms{X}$ is called an absorbing set of this semiflow, if for any given bounded set $B \subset \ms{X}$ there exists a finite time $T_B \geq 0$ depending on $B$, such that $S(t)B \subset B^*$ permanently for all $t \geq T_B$. 
\end{definition}

\section{\textbf{Global Existence of Solutions and Absorbing Semiflow}}

First we prove the global existence of weak solutions in time for the initial value problem \eqref{pb} of the boundary coupled partly diffusive Hindmarsh-Rose equations.

\begin{theorem} \label{Tm}
	For any given initial state $(g^0, g_1^0, \cdots , g_m^0) \in \mathbb{H}$, there exists a unique global weak solution in time, $(g(t), g_1 (t), \cdots, g_m(t)), \, t \in [0, \infty)$, of the initial value problem \eqref{pb} formulated from the original initial-boundary value problem \eqref{cHR}-\eqref{inc}. 
\end{theorem}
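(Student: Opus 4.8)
The strategy is the standard one for dissipative parabolic systems: Proposition~\ref{pps} already provides a unique local weak solution together with continuous dependence on the data, so it suffices to establish a uniform \emph{a priori} bound for the $\mH$-norm of the solution on every finite time interval; the solution then cannot blow up in finite time, which by the usual continuation principle forces the maximal existence interval to be $[0,\infty)$. To obtain such a bound I would work with a weighted energy functional
\[
	\Phi(t) = \|u(t)\|^2 + \gl_1\|v(t)\|^2 + \gl_2\|w(t)\|^2 + \sum_{i=1}^m \bigl( \|u_i(t)\|^2 + \gl_1\|v_i(t)\|^2 + \gl_2\|w_i(t)\|^2 \bigr),
\]
with small constants $\gl_1,\gl_2>0$ to be fixed below, carrying out all estimates on the smooth Galerkin approximations and passing to the limit afterwards by means of the regularity \eqref{soln}.

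Differentiating $\Phi$ along the flow amounts to testing the six groups of equations in \eqref{cHR} against $u,\gl_1 v,\gl_2 w,u_i,\gl_1 v_i,\gl_2 w_i$ and summing. Integrating by parts in the diffusion terms and invoking the boundary conditions \eqref{nbc} leaves exactly the boundary contribution $-dp\sum_{i=1}^m\int_{\G_i}(u-u_i)^2\,dS\le 0$, so the boundary coupling is automatically dissipative once the whole network is summed --- this is the reason for estimating all $m+1$ neurons at once. The cubic term is dominated pointwise, $au^3-bu^4\le -\tfrac{b}{2}u^4+C$, producing the strong damping $-\tfrac{b}{2}\|u\|_{L^4}^4$ for the central neuron and likewise for each $u_i$ (here $n\le 3$ guarantees, through $H^1(\gw)\hookrightarrow L^6(\gw)$, that these quartic quantities are finite). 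The quadratic coupling term is treated by $|\gb\langle u^2,v\rangle|\le\|u\|_{L^4}^2\|v\|$ followed by Young's inequality, which feeds back into $\|u\|_{L^4}^4$ with a coefficient of order $\gl_1\gb^2$; this is absorbed by the cubic damping provided $\gl_1$ is chosen small relative to $b/\gb^2$. The remaining linear and constant terms ($\langle v,u\rangle$, $\langle w,u\rangle$, $J\langle 1,u\rangle$, $q\langle u,w\rangle$, $\ap\langle 1,v\rangle$, $qc\langle 1,w\rangle$, and their counterparts for the neighbors) are handled by Young's inequality together with the interpolation $\|u\|^2\le\ep\|u\|_{L^4}^4+C_\ep$. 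After fixing $\gl_1,\gl_2$ and the auxiliary small parameters, one arrives at a differential inequality
\[
	\frac{d\Phi}{dt} + k\,\Phi \le K \qquad\text{on } (0,\tau),
\]
with constants $k>0$ and $K\ge 0$ depending only on the structural parameters of \eqref{cHR} and on $m$.

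Gronwall's inequality then yields $\Phi(t)\le\Phi(0)e^{-kt}+K/k$ on $[0,\tau]$, so $\|(g,g_1,\dots,g_m)(t)\|_{\mH}$ is bounded by a constant independent of $\tau$; consequently the local solution of Proposition~\ref{pps} can be continued step by step to a unique global weak solution on $[0,\infty)$. As a byproduct, the same inequality shows that a suitable bounded ball of $\mH$ (determined by $K/k$) is an absorbing set for the resulting solution semiflow in the sense of Definition~\ref{Dabsb}, which is precisely what the next section requires.

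The main difficulty is the simultaneous bookkeeping of the cubic dissipation and the quadratic coupling $-\gb u^2 v$: one must verify that, for a single admissible choice of the weights $\gl_1,\gl_2$ (and of the $\ep$'s), the positive contributions to $\|u\|_{L^4}^4$ arising from $-\gb\langle u^2,v\rangle$ and from the linear cross terms are strictly dominated by the $-\tfrac{b}{2}\|u\|_{L^4}^4$ damping, uniformly over all $m+1$ neurons. By contrast, the boundary coupling, which at first sight looks like the obstruction, causes no trouble at all: once the estimates for the central neuron and its neighbors are added, it contributes only the nonpositive surface term displayed above.
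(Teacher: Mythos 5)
Your proposal is correct and follows essentially the same route as the paper: a weighted $L^2$ energy estimate summed over all $m+1$ neurons so that the boundary coupling contributes only the nonpositive term $-dp\sum_i\int_{\G_i}(u-u_i)^2\,dS$, with the quartic damping from $-bu^4$ absorbing the $\beta u^2 v$ coupling and the lower-order terms, leading to $\frac{d\Phi}{dt}+k\Phi\le K$, Gronwall, and continuation. Your weights $(1,\gl_1,\gl_2)$ with $\gl_1,\gl_2$ small are just a rescaling of the paper's choice $(C_1,1,1)$ with $C_1=\frac{1}{b}(\beta^2+4)$, so the two arguments are the same up to normalization, and both yield the absorbing-set estimate used in the next section.
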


\begin{proof}
	Sum up the $L^2$ inner-products of the $u$-equation with $C_1 u(t)$ and the $u_i$-equation with $C_1 u_i(t)$ for $1 \leq i \leq m$, the constant $C_1 > 0$ to be chosen, we get
	\begin{equation*}
	\begin{split}
	&\frac{C_1}{2} \frac{d}{dt} \left(\|u \|^2 + \sum_{i = 1}^m \|u_i\|^2 \right) + C_1 d \left(\| \nabla u \|^2 + \sum_{i=1}^m \|\nb u_i\|^2 \right) \\
	= &\, C_1 \int_\gw \left[(au^3 -bu^4  + u v - u w +J u) + \sum_{i=1}^m (au_i^3 -bu_i^4  + u_i v_i - u_i w_i +Ju_i)\right] dx \\
	+ &\,d C_1\, \sum_{i=1}^m \int_{\G_i} \left(p(u_i - u)u + p(u - u_i) u_i \right) \, dx \\
	= &\, \int_\gw C_1 (au^3 -bu^4  + u v - u w +J u)\, dx \\
	+ &\, \sum_{i=1}^m \int_\gw (C_1 (au_i^3 -bu_i^4  + u_i v_i - u_i w_i +Ju_i) \, dx - d C_1p \, \sum_{i=1}^m \int_{\G_i} ( u - u_i)^2\, dx \\
	\leq &\, C_1 \int_\gw \left[(au^3 -bu^4  + u v - u w +J u) + \sum_{i=1}^m (au_i^3 -bu_i^4  + u_i v_i - u_i w_i +Ju_i)\right] dx,
	\end{split}
	\end{equation*}
by the coupling boundary condition \eqref{nbc}. Then sum up the $L^2$ inner-products of the $v$-equation with $v (t)$ and the $v_i$-equation with $v_i(t)$ for $1 \leq i \leq m$, we obtain
	\begin{equation*}
	\begin{split}
	&\frac{1}{2} \frac{d}{dt} (\|v \|^2 + \sum_{i=1}^m \| v_i\|^2) = \int_\gw \left[(\ap v - \gb u^2 v - v^2 + \sum_{i=1}^m (\ap v_i - \gb u_i^2 v_i - v_i^2) \right]dx \\
	\leq &\int_\gw \left[\ap v +\frac{1}{2} (\gb^2 u^4 + v^2) - v^2 +  \sum_{i=1}^m (\ap v_i +\frac{1}{2} (\gb^2 u_i^4 + v_i^2) - v_i^2)\right] dx \\
	\leq &\int_\gw \left[(1 + m)\ap^2 +\frac{1}{2} \gb^2 (u^4 +  \sum_{i=1}^m u_i^4) - \frac{3}{8} (v^2 +  \sum_{i=1}^m v_i^2) \right] dx,
	\end{split}
	\end{equation*}
and similarly for the $w$-equation and $w_i$-equation, $1 \leq i \leq m$, we have
	\begin{equation*} 
	\begin{split}
	&\frac{1}{2} \frac{d}{dt} (\|w \|^2 + \sum_{i=1}^m \| w_i \|^2) = \int_\gw \left[(q (u - c)w - rw^2) +  \sum_{i=1}^m (q (u_i - c)w_i - rw_i^2) \right] dx  \\
	\leq & \int_\gw \left[\frac{q^2}{2r} (u - c)^2 + \frac{1}{2} r w^2 - r w^2 +  \sum_{i=1}^m \left(\frac{q^2}{2r} (u_i - c)^2 + \frac{1}{2} r w_i^2 - r w_i^2\right)\right] dx \\
	\leq & \int_\gw \left[\frac{q^2}{r} \left(u^2 +  \sum_{i=1}^m u_i^2 + (1 + m)c^2\right) - \frac{r}{2} \left(w^2 +  \sum_{i=1}^m w_i^2\right)\right] dx.
	\end{split}
	\end{equation*}

To treat the nonlinear integral terms on the right-hand side of the first inequality above, we choose the positive constant to be $C_1 = \frac{1}{b} (\gb^2 + 4)$. Then 
	\beq \bl{C1u}
		\begin{split}
		&\int_\gw (- C_1 b u^4)\, dx + \int_\gw (\gb^2 u^4)\, dx \leq \int_\gw (-4 u^4)\, dx, \\
		&\int_\gw (- C_1 b u_i^4)\, dx + \int_\gw (\gb^2 u_i^4)\, dx \leq \int_\gw (-4 u_i^4)\, dx, \quad i= 1, \cdots, m.
		\end{split}
	\eeq
Using the Young's inequality in an appropriate way, we deduce that 
	\beq \bl{C3u}
		\begin{split}
		&\int_\gw C_1 au_i^3\, dx \leq \frac{3}{4} \int_\gw u^4\, dx + \frac{1}{4}\int_\gw (C_1 a)^4 \, dx \leq \int_\gw u^4\, dx + (C_1 a)^4 |\gw|, \\
		&\int_\gw C_1 au_i^3\, dx \leq \frac{3}{4} \int_\gw u_i^4\, dx + \frac{1}{4}\int_\gw (C_1 a)^4 \, dx \leq \int_\gw u_i^4\, dx + (C_1 a)^4 |\gw|, 
		\end{split}
	\eeq
for $i = 1, \cdots, m$. Moreover, we have
	\beq \bl{uvw}
	\begin{split}
	&C_1 \int_\gw \left( (uv -uw + ju) + \sum_{i=1}^m (u_i v_i - u_i w_i + Ju_i)\right) dx \\[2pt]
	\leq &\, \int_\gw \left(2(C_1 u)^2 + \frac{1}{8} v^2 + \frac{(C_1 u)^2}{r} + \frac{1}{4} r w^2 + C_1 u^2 + C_1J^2 \right) dx \\
	+ &\, \int_\gw \sum_{i=1}^m \left(2(C_1 u_i)^2 + \frac{1}{8} v_i^2 + \frac{(C_1 u_i)^2}{r} + \frac{1}{4} r w_i^2 + C_1 u_i^2 + C_1J^2 \right) dx 	\end{split}
	\eeq
	where on the right-hand side of the inequality \eqref{uvw} we can further treat the terms involving $u^2$ and $u_i^2$ as follows,
	\beq \bl{ur}
	\begin{split}
	&\int_\gw \left(2(C_1 u)^2 + \frac{(C_1 u)^2}{r} + C_1 u^2 + \sum_{i=1}^m \left[2(C_1 u_i)^2 + \frac{(C_1 u_i)^2}{r} + C_1 u_i^2\right] \right) dx \\
	 \leq &\, \int_\gw \left(u^4 + \sum_{i=1}^m u_i^4 \right) dx + (1 + m)\left[C_1^2 \left(2 +\frac{1}{r}\right) + C_1\right]^2 |\gw |.
	\end{split}
	\eeq
	Besides we have
	\beq \bl{uq}
	\int_\gw \frac{1}{r} q^2 \left(u^2 + \sum_{i=1}^m u_i^2\right) dx \leq \int_\gw \left(u^4 + \sum_{i=1}^m u_i^4\right) dx + \frac{q^4}{r^2}(1 + m) |\gw|.
	\eeq
Substitute the estimates \eqref{C1u} -\eqref{uq} into the first three differential inequalities in this proof and then sum them up to obtain
	\beq \label{g2}
	\begin{split}
		&\frac{1}{2} \frac{d}{dt} \left[C_1 \left(\|u\|^2 +  \sum_{i = 1}^m \|u_i\|^2\right) +  \left(\|v\|^2 +  \sum_{i=1}^m \| v_i\|^2\right) + \left(\|w\|^2 +  \sum_{i=1}^m \| w_i \|^2\right) \right] \\
		&\; + C_1 d\, \left(\|\nb u \|^2 + \sum_{i=1}^m \|\nb u_i \|^2\right)  \\[2pt]
		\leq &\, C_1 \int_\gw \left[(au^3 -bu^4  + u v - u w +J u) + \sum_{i=1}^m (au_i^3 -bu_i^4  + u_i v_i - u_i w_i +Ju_i)\right] dx \\
		&+ \int_\gw \left[(1 + m)\ap^2 +\frac{1}{2} \gb^2 (u^4 +  \sum_{i=1}^m u_i^4) - \frac{3}{8} (v^2 +  \sum_{i=1}^m v_i^2) \right] dx \\
		&+ \int_\gw \left[\frac{q^2}{r} \left(u^2 +  \sum_{i=1}^m u_i^2 + (1 + m)c^2\right) - \frac{r}{2} \left(w^2 +  \sum_{i=1}^m w_i^2\right)\right] dx \\
		\leq & \int_\gw (3 - 4)\left(u^4 + \sum_{i=1}^m u_i^4 \right) dx + \int_\gw \left(\frac{1}{8} - \frac{3}{8}\right) \left(v^2 + \sum_{i=1}^m v_i^2\right) dx \\
		&+ \int_\gw \left(\frac{1}{4} - \frac{1}{2} \right) r \left(w^2 + \sum_{i=1}^m w_i^2 \right) dx \\
		&+  \, (1 + m)|\gw | \left( (C_1 a)^4 + C_1 J^2  + \left[C_1^2 \left(2 +\frac{1}{r}\right) + C_1\right]^2 + 2\ap^2 + \frac{q^2 c^2}{r} + \frac{q^4}{r^2} \right) \\
		= &\, - \int_\gw \left(\left[u^4 + \sum_{i=1}^m u_i^4 \right] + \frac{1}{4} \left[v^2 + \sum_{i=1}^m v_i^2\right] + \frac{r}{4} \left[w^2 + \sum_{i=1}^m w_i^2 \right] \right) dx + C_2 (1 + m)|\gw |, 	
	\end{split}
	\eeq
	where $C_2 = 2(C_1 a)^4 + 2C_1 J^2  + 2\left[C_1^2 \left(2 +\frac{1}{r}\right) + C_1\right]^2 + 4\ap^2 + \frac{2q^2 c^2}{r} + \frac{2q^4}{r^2}$ is a constant. 
From \eqref{g2} it follows that
	\beq \label{E1}
	\begin{split}
		&\frac{d}{dt} \left[C_1 \left(\|u\|^2 +  \sum_{i = 1}^m \|u_i\|^2\right) +  \left(\|v\|^2 +  \sum_{i=1}^m \| v_i\|^2\right) + \left(\|w\|^2 +  \sum_{i=1}^m \| w_i \|^2\right) \right] \\
		+ \,2& \int_\gw \left(\left[u^4 + \sum_{i=1}^m u_i^4 \right] + \frac{1}{4} \left[v^2 + \sum_{i=1}^m v_i^2\right] + \frac{r}{4} \left[w^2 + \sum_{i=1}^m w_i^2 \right] \right) dx \leq C_2 (1 + m)|\gw|, 
	\end{split}
	\eeq
for $t \in I_{max} = [0, T_{max})$, the maximal time interval of solution existence. Note that in the first part of the integral term of \eqref{E1} we have
	$$ 
	  \frac{1}{4} \left(C_1 u^2 - \frac{C_1^2}{16}\right) \leq u^4 \quad \text{and} \quad \frac{1}{4} \left(C_1 u_i^2 - \frac{C_1^2}{16}\right) \leq u_i^4, \quad 1 \leq i \leq m.
	$$
Then \eqref{E1} yields the following differential inequality
	\beq \bl{E2}
	\begin{split}
	 &\frac{d}{dt} \left[C_1 \left(\|u\|^2 +  \sum_{i = 1}^m \|u_i\|^2\right) +  \left(\|v\|^2 +  \sum_{i=1}^m \| v_i\|^2\right) + \left(\|w\|^2 +  \sum_{i=1}^m \| w_i \|^2\right) \right] \\
		&+ \, r^* \left[C_1 \left(\|u\|^2 +  \sum_{i = 1}^m \|u_i\|^2\right) +  \left(\|v\|^2 +  \sum_{i=1}^m \| v_i\|^2\right) + \left(\|w\|^2 +  \sum_{i=1}^m \| w_i \|^2\right) \right]  \\
	&\leq \frac{d}{dt} \left[C_1 \left(\|u\|^2 +  \sum_{i = 1}^m \|u_i\|^2\right) +  \left(\|v\|^2 +  \sum_{i=1}^m \| v_i\|^2\right) + \left(\|w\|^2 +  \sum_{i=1}^m \| w_i \|^2\right) \right] \\
		&+ \, \frac{1}{2} \int_\gw \left(\left[u^2 + \sum_{i=1}^m u_i^2 \right] + \left[v^2 + \sum_{i=1}^m v_i^2\right] + r \left[w^2 + \sum_{i=1}^m w_i^2 \right] \right) dx \\
		&\leq \left(C_2 + \frac{C_1^2}{32}\right)(1 + m) |\gw |,
	\end{split}
	\eeq
where $r^* = \frac{1}{2} \min \{1, r\}$. Apply the Gronwall inequality to \eqref{E2}. Then we obtain the following bounding estimate of the weak solutions:
	\beq \label{dse}
	\begin{split}
		&\|g(t, g^0)\|^2 + \sum_{i=1}^m \|g_i (t, g_i^0)\|^2 \\
		\leq &\, \frac{\max \{C_1, 1\}}{\min \{C_1, 1\}}e^{- r^* t} \left(\|g^0\|^2 + \sum_{i=1}^m \|g_i^0\|^2\right) + \frac{M}{\min \{C_1, 1\}} |\gw | \\
		\leq &\, \frac{\max \{C_1, 1\}}{\min \{C_1, 1\}} \left(\|g^0\|^2 + \sum_{i=1}^m \|g_i^0\|^2\right) + \frac{M}{\min \{C_1, 1\}} |\gw |
	\end{split}
	\eeq 
for $t \in I_{max} = [0, T_{max}) = [0, \infty)$, where 
\beq \bl{M}
	M = \frac{1 + m}{r^*}\left(C_2 + \frac{C_1^2}{32}\right).
\eeq
The estimate \eqref{dse} shows that the weak solution $g(t, x)$ will never blow up at any finite time because it is bounded uniformly on the existence time interval. Therefore, for any initial data in $\mH$, the unique weak solution of the initial value problem \eqref{pb} of the boundary coupled neuron network \eqref{cHR}-\eqref{inc} exists in $\mH$ globally in time.
\end{proof}

The global existence and uniqueness of the weak solutions and their continuous dependence on the initial data enable us to define the solution semiflow $\{S(t): \mH \to \mH\}_{t \geq 0}$ of the boundary coupled Hindmarsh-Rose neuron network system \eqref{cHR}-\eqref{inc} on the space $\mH$ as follows,
\beq \bl{HRS}
	S(t): (g^0, g_1^0, \cdots, g_m^0) \longmapsto (g(t, g^0), g_1(t, g_1^0), \cdots, g_m(t, g_m^0)), \quad  t \geq 0.
\eeq
We call this semiflow $\{S(t)\}_{t \geq 0}$ the \emph{boundary coupling Hindmarsh-Rose semiflow}. 

\begin{theorem} \label{Hab}
	There exists an absorbing set for the boundary coupling Hindmarsh-Rose semiflow $\{S(t)\}_{t \geq 0}$ in the space $\mH$, which is the bounded ball 
\beq \label{abs}
	B^* = \{ h \in \mH: \| h \|^2 \leq Q\}
\eeq 
	where $Q = \frac{M |\gw |}{\min \{C_1, 1\}} + 1$.
\end{theorem}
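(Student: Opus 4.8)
The plan is to read off the conclusion directly from the uniform dissipative estimate \eqref{dse} already established in the proof of Theorem \ref{Tm}. That estimate shows that along every weak solution one has
\[
	\|g(t, g^0)\|^2 + \sum_{i=1}^m \|g_i (t, g_i^0)\|^2 \leq \frac{\max \{C_1, 1\}}{\min \{C_1, 1\}}\, e^{- r^* t} \left(\|g^0\|^2 + \sum_{i=1}^m \|g_i^0\|^2\right) + \frac{M}{\min \{C_1, 1\}} |\gw |,
\]
for all $t \geq 0$, with $M$ given by \eqref{M} and $r^* = \tfrac12 \min\{1, r\}$. The right-hand side is the sum of an exponentially decaying term and the constant $\frac{M|\gw|}{\min\{C_1,1\}} = Q - 1$, so the semiflow is uniformly bounded forward in time and asymptotically enters the ball $B^*$ of \eqref{abs}.

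First I would fix an arbitrary bounded set $B \subset \mH$ and set $R = \sup\{\|h\|^2 : h \in B\} < \infty$. For any initial state $h^0 = (g^0, g_1^0, \dots, g_m^0) \in B$, the estimate above gives
\[
	\|S(t) h^0\|^2 \leq \frac{\max \{C_1, 1\}}{\min \{C_1, 1\}}\, e^{- r^* t}\, R + (Q - 1), \qquad t \geq 0 .
\]
Next I would choose the entering time: if $\frac{\max\{C_1,1\}}{\min\{C_1,1\}}\, R \leq 1$ set $T_B = 0$; otherwise set
\[
	T_B = \frac{1}{r^*}\, \ln\!\left(\frac{\max\{C_1,1\}}{\min\{C_1,1\}}\, R\right) > 0 .
\]
In either case, for every $t \geq T_B$ one has $\frac{\max\{C_1,1\}}{\min\{C_1,1\}}\, e^{-r^* t}\, R \leq 1$, hence $\|S(t) h^0\|^2 \leq Q$, i.e. $S(t) h^0 \in B^*$. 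Since $h^0 \in B$ was arbitrary, $S(t) B \subset B^*$ for all $t \geq T_B$, which is exactly the property required in Definition \ref{Dabsb}. The ball $B^*$ is plainly bounded in $\mH$, so it is an absorbing set for $\csg$.

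There is no serious obstacle here: the analytic work was done in Theorem \ref{Tm}, and the only points worth recording are that $\csg$ is a genuine semiflow on $\mH$ — which follows from the global existence, uniqueness and continuous dependence on initial data asserted there, together with the semigroup identity $S(t+s) = S(t)S(s)$ inherited from uniqueness of weak solutions — and that the threshold $Q$ in \eqref{abs} strictly exceeds the asymptotic bound $\frac{M|\gw|}{\min\{C_1,1\}}$ thanks to the ``$+1$'' in its definition, which is precisely what upgrades the asymptotic bound into a permanent absorption for $t \geq T_B$.
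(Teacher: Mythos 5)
Your proposal is correct and follows essentially the same route as the paper: you read the absorbing property off the dissipative estimate \eqref{dse}, and your entering time $T_B$ is exactly the paper's $T_0(B) = \frac{1}{r^*}\log^+\bigl(\rho\,\frac{\max\{C_1,1\}}{\min\{C_1,1\}}\bigr)$ with $\rho$ the squared radius of $B$. Your explicit handling of the slack provided by the ``$+1$'' in $Q$ is a fine (and slightly more careful) spelling-out of what the paper leaves implicit.
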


\begin{proof}
This is the consequence of the uniform estimate \eqref{dse} in Theorem \ref{Tm} because
	\beq \label{lsp}
	\limsup_{t \to \infty} \, \left(\|g(t)\|^2 + \sum_{i=1}^m \|g_i(t)\|^2\right) < Q = \frac{M |\gw |}{\min \{C_1, 1\}} + 1
	\eeq
	for all weak solutions of \eqref{pb} with any initial data in $\mH$. Moreover, for any given bounded set $B = \{h \in \mH: \|h \|^2 \leq \rho \}$ in $\mH$, there exists a finite time 
	\beq \label{T0B}
	T_0 (B) = \frac{1}{r^*} \log^+ \left(\rho \, \frac{\max \{C_1, 1\}}{\min \{C_1, 1\}}\right)
	\eeq
	such that all the solution trajectories started from the set $B$ will permanently enter the bounded ball $B^*$ shown in \eqref{abs} for $t \geq T_0(B)$.
\end{proof}

\section{\textbf{Synchronization of the Boundary Coupled Neuron Neiwork}} 

Synchronization for ensemble of neurons and for complex neuron network or some artificial neural network is one of the central and significant topics in neuroscience and in the theory of artificial intelligence. 

We introduced a new concept of synchronization dynamics for a neuron network.
\begin{definition} \bl{DaD}
	For the dynamical system generated by a model differential equation such as \eqref{pb} of multiple neurons with whatever type of coupling, define the \emph{asynchronous degree} in a state space $\ms{X}$ to be
	$$
	deg_s (\ms{X})= \sum_{j} \sum_{k} \, \sup_{g_j^0, \, g_k^0 \in \ms{X}} \, \left\{\limsup_{t \to \infty} \, \|g_j (t) - g_k(t)\|_{\ms{X}}\right\},
	$$ 
where $g_j(t)$ and $g_k(t)$ are any two solutions of the model differential equation with the initial states $g_j^0$ and $g_k^0$, respectively. Then the coupled neuron network is said to be asymptotically synchronized in the space $\ms{X}$, if $deg_s (\ms{X}) = 0$.
\end{definition}

In this section, we shall prove the main result of this work on the asymptotic synchronization of the boundary coupled Hindmarsh-Rose neuron network described by \eqref{cHR}-\eqref{inc} in the space $H$. This result provides a quantitative threshold for the coupling strength and the stimulation signals to reach the asymptotic synchronization.

To address mathematically this synchronization problem of the neuron network specified in Section 1, denote by $U_i(t) = u(t) - u_i (t), V_i(t) = v(t) - v_i(t), W_i(t) = w(t) - w_i(t)$, for $i = 1, \cdots, m$. Then for any given initial states $g^0$ and $g_i^0, \cdots, g_m^0$ in the space $H$, the difference between the solutions associated with the neuron $\mathcal{N}_c$ and the neuron $\mathcal{N}_i$ is
	$$
	g (t, g^0) - g_i (t, g_i^0) = \text{col}\, (U_i(t), V_i(t), W_i(t)), \quad t \geq 0.
	$$
	
	By subtraction of the corresponding three pairs of equations of the $i$-th neuron from the central neuron in \eqref{cHR}, we obtain the differencing Hindmarsh-Rose equations as follows. For $i = 1, \cdots, m$,
\beq \bl{dHR}
	\begin{split}
		\frac{\pdr U_i}{\pdr t} & = d \gd U_i +  a(u + u_i)U_i - b(u^2 + u u_i + u_i^2)U_i + V_i - W_i,  \\
		\frac{\pdr V_i}{\pdr t} & =  - V_i - \beta (u +  u_i)U_i,    \\
		\frac{\pdr W_i}{\pdr t} & = q U_i - r W_i.
	\end{split}
\eeq
Here is the main result on the synchronization of the boundary coupled Hindmarsh-Rose neuron network.
\begin{theorem} \bl{ThM}
	If the threshold condition for stimulation signal strength of the boundary coupled Hindmarsh-Rose neuron network is satisfied that for any given initial conditions $g^0, g_i^0 \in H$, 
\beq \bl{SC}
	p\, \liminf_{t \to \infty} \,\int_{\G_i} U_i^2(t, x)\, dx > R \, |\gw|, \quad i = 1, \cdots, m,
\eeq
where
\beq \bl{R}
	R = \frac{1 + m}{r^* \min \{C_1, 1\}}\left[\frac{C_1^2}{32} + C_2\right] \left[\eta_2\, d \,|\gw | + \left[\frac{8\beta^2}{b} + \frac{a^2}{b} + \frac{b}{16\beta^2 r} \left[q - \frac{8\beta^2}{b}\right]^2\right] \right]
\eeq
with $C_1 = \frac{1}{b} (\beta^2 + 4)$, $\eta_2 > 0$ being the constant in Poincar\'{e} inequality \eqref{Pcr}, and
\beq  \bl{C2}
	C_2 = 2(C_1 a)^4 + 2C_1 J^2  + 2\left[C_1^2 \left(2 +\frac{1}{r}\right) + C_1\right]^2 + 4\ap^2 + \frac{2q^2 c^2}{r} + \frac{2q^4}{r^2},
\eeq
then the boundary coupled Hindmarsh-Rose neuron network generated by \eqref{pb} is asymptotically synchronized in the space $H$ at a uniform exponential rate.
\end{theorem}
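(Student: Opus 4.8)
The plan is to prove that for each $i=1,\dots,m$ the \emph{differencing functional}
\[
\Phi_i(t) = C_1\|U_i(t)\|^2 + \|V_i(t)\|^2 + \|W_i(t)\|^2
\]
decays to $0$ as $t\to\infty$ at the uniform exponential rate $r^*=\tfrac12\min\{1,r\}$, and then to read off $\deg_s(H)=0$: for any two indices $j,k\in\{c,1,\dots,m\}$ the difference $g_j(t)-g_k(t)$ is, componentwise, a linear combination of $\text{col}(U_j,V_j,W_j)$ and $\text{col}(U_k,V_k,W_k)$, each of which then tends to $0$ in $H$. Throughout I work with the differencing Hindmarsh--Rose system \eqref{dHR}, and for $t$ past the entering time $T_0(B)$ of Theorem \ref{Hab} I may use the absorbing bound $\|g(t)\|^2+\sum_{i=1}^m\|g_i(t)\|^2\le Q$.

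\textbf{Energy estimate.} I would take the $L^2(\Omega)$ inner products of the $U_i$-equation with $C_1U_i(t)$ (the same $C_1=\tfrac1b(\beta^2+4)$ as in Theorem \ref{Tm}), of the $V_i$-equation with $V_i(t)$, and of the $W_i$-equation with $W_i(t)$, and add them. Integrating $dC_1\langle\Delta U_i,U_i\rangle$ by parts and inserting the coupling conditions \eqref{nbc} — which give $\partial U_i/\partial\nu=-2pU_i$ on $\Gamma_i$, $\partial U_i/\partial\nu=0$ on $\Gamma_0$, and the cross-flux $\partial U_i/\partial\nu=-pU_j$ on each $\Gamma_j$ with $j\neq 0,i$ — extracts the dissipative boundary term $-2dC_1p\int_{\Gamma_i}U_i^2\,dx$ that the threshold \eqref{SC} will exploit, together with residual cross-boundary terms $-dC_1p\sum_{j\neq0,i}\int_{\Gamma_j}U_jU_i\,dx$. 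The crucial algebraic observation is that the cubic coupling is sign-favorable,
\[
\big\langle -b(u^2+uu_i+u_i^2)U_i,\;U_i\big\rangle = -b\!\int_\Omega(u^2+uu_i+u_i^2)U_i^2\,dx \le 0,
\]
because $u^2+uu_i+u_i^2=\tfrac14(u+u_i)^2+\tfrac34(u^2+u_i^2)\ge 0$ and $(u+u_i)^2\le 4(u^2+uu_i+u_i^2)$. I would spend fractions of this negative term, via Young's inequality, to absorb the quadratic coupling $aC_1\int_\Omega(u+u_i)U_i^2\,dx$ — leaving $\tfrac{a^2}{b}\|U_i\|^2$ — and the mixed terms $-\beta\int_\Omega(u+u_i)U_iV_i\,dx$ and $q\int_\Omega U_iW_i\,dx$ from the $V_i$- and $W_i$-equations, whose residues in $\|V_i\|^2$ and $\|W_i\|^2$ are then soaked up by the dissipation $-\|V_i\|^2$ and $-r\|W_i\|^2$ after a completion of squares in $q$ and $r$; this is where the terms $\tfrac{8\beta^2}{b}$ and $\tfrac{b}{16\beta^2r}(q-\tfrac{8\beta^2}{b})^2$ of \eqref{R} arise.

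\textbf{Poincar\'e, absorbing bound, Gronwall.} With the Young weights tuned as above, the estimate assembles into a Gronwall-type inequality which, once the boundary dissipation $-2dC_1p\int_{\Gamma_i}U_i^2\,dx$ is isolated and the surviving gradient term $dC_1\|\nabla U_i\|^2$ is used — together with the Poincar\'e inequality \eqref{Pcr} and the absorbing bound $Q$ — to control the leftover $\|U_i\|^2$ (this produces the factor $\eta_2 d|\Omega|$ and the prefactor $\tfrac{1+m}{r^*\min\{C_1,1\}}[\tfrac{C_1^2}{32}+C_2]$ inherited from Theorem \ref{Hab}), reads
\[
\frac{d}{dt}\Phi_i + r^*\Phi_i \le R\,|\Omega| - 2dC_1p\!\int_{\Gamma_i}U_i^2\,dx
\]
for all large $t$, with $R$ as in \eqref{R} up to the normalization of the test functions. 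By the threshold condition \eqref{SC} the right-hand side is $\le 0$ for all $t$ sufficiently large, so $\frac{d}{dt}\Phi_i\le -r^*\Phi_i$ eventually, and Gronwall's inequality yields $\Phi_i(t)\le \Phi_i(T)e^{-r^*(t-T)}$ — decay to $0$ at the rate $r^*$, uniformly in the initial data. Summing over $i$ then gives $\deg_s(H)=0$ and the claimed uniform exponential synchronization.

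\textbf{Main obstacle.} The conceptual skeleton is routine; the real difficulty is the bookkeeping needed to land on exactly the constant $R$ of \eqref{R}. One must split $\int_\Gamma(\partial U_i/\partial\nu)U_i\,dx$ correctly over the pieces $\Gamma_0,\dots,\Gamma_m$ and dispose of the cross-boundary integrals $\int_{\Gamma_j}U_jU_i\,dx$ (either by the trace theorem against $dC_1\|\nabla U_i\|^2$, or by summing the $m$ inequalities and rearranging), and one must choose the Young weights for the $a$-, $\beta$- and $q$-couplings so that precisely the coefficient $\tfrac{8\beta^2}{b}+\tfrac{a^2}{b}+\tfrac{b}{16\beta^2r}(q-\tfrac{8\beta^2}{b})^2$ remains in front of $\|U_i\|^2$ while the residual gradient term stays dominated by $dC_1\|\nabla U_i\|^2$; reconciling this with the absorbing-ball quantities is the crux of the computation.
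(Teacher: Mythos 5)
Your overall route is the same as the paper's: form the difference system \eqref{dHR}, run a weighted $L^2$ energy estimate, extract the dissipative boundary term from \eqref{nbc}, control the leftover $\|U_i\|^2$ through the Poincar\'e inequality \eqref{Pcr} and the absorbing bound of Theorem \ref{Hab}, invoke the threshold \eqref{SC} to make the right-hand side negative, and finish with Gronwall plus the triangle inequality for $\deg_s(H)$. But the proposal has genuine gaps precisely at the quantitative steps the theorem is about. First, your multiplier is wrong for the stated constant: you test the $U_i$-equation with $C_1U_i$, $C_1=\tfrac1b(\beta^2+4)$, whereas the paper tests with $KU_i$ and chooses $K=\tfrac{8\beta^2}{b}$ (see \eqref{lbd}). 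That choice is not cosmetic. The mixed term $-\beta\int_\Omega(u+u_i)U_iV_i\,dx$ is estimated by $2\beta^2\int_\Omega(u^2+u_i^2)U_i^2\,dx+\tfrac14\|V_i\|^2$, and the $2\beta^2(u^2+u_i^2)U_i^2$ piece must be absorbed by the cubic-induced term $-\tfrac{Kb}{2}\int_\Omega(u^2+u_i^2)U_i^2\,dx$, which requires $Kb/4\ge 2\beta^2$, i.e.\ $K\ge 8\beta^2/b$; with your $C_1$ this fails whenever $\beta^2>4/3$, and there is no absorbing-ball control of $\int_\Omega u^4U_i^2$ available to rescue a different splitting. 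Moreover the specific combination $\tfrac{8\beta^2}{b}+\tfrac{a^2}{b}+\tfrac{b}{16\beta^2 r}\bigl(q-\tfrac{8\beta^2}{b}\bigr)^2$ appearing in \eqref{R} is exactly $K+\tfrac{a^2}{b}+\tfrac1{2rK}(q-K)^2$ with $K=8\beta^2/b$, i.e.\ the coefficient produced by that multiplier (cf.\ \eqref{me}--\eqref{Keq}); since the theorem asserts synchronization under the explicit threshold $R$, ``$R$ up to the normalization of the test functions'' does not prove the statement.

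Second, the boundary bookkeeping you flag as the ``main obstacle'' is left unresolved, and it is not a routine detail: your cross-flux terms $-p\sum_{j\neq 0,i}\int_{\Gamma_j}U_jU_i\,dx$ have no sign, a trace-theorem bound costs $H^1$-norms of $U_j$ that are not present as dissipation in the $i$-th inequality, and summing over $i$ does not produce cancellation; the paper disposes of the boundary integral in one identity \eqref{bdt}, keeping only the good term on $\Gamma_i$ (plus a nonpositive remainder), and your proposal contains no completed substitute for that step. Finally, the claimed decay rate $r^*=\tfrac12\min\{1,r\}$ is not what the estimate yields: after Poincar\'e the $U_i$-dissipation is $2\eta_1 dK\|U_i\|^2$, so the uniform rate is $\mu=\min\{2\eta_1 d,1,r\}$ as in \eqref{dsyn}, and the diffusion/Poincar\'e constants cannot be dropped from it.
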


\begin{proof}
	Step 1. Take the $L^2$ inner-products of the first equation in \eqref{dHR} with $KU_i(t)$, the second equation in \eqref{dHR} with $V_i(t)$, and the third equation in \eqref{dHR} with $W_i(t)$, where $K > 0$ to be chosen. Then sum them up and use Young's inequalities to get
\beq \bl{eG}
	\begin{split}
		&\frac{1}{2} \frac{d}{dt} (K\|U_i (t)\|^2 + \|V_i (t)\|^2 + \|W_i (t)\|^2) + d K \|\nb U_i (t)\|^2  + \|V_i (t)\|^2 + r\, \|W_i (t)\|^2 \\[3pt]
		= &\,\int_{\G} K\frac{\pdr U_i}{\pdr \nu}\, U_i \, dx +  \int_\gw K (a(u + u_i)U_i^2 - b(u^2 + u u_i + u_i^2) U_i^2 )\, dx \\[2pt]
		+ &\, \int_\gw \left(K U_i V_i -\beta (u +  u_i)U_i V_i + (q - K)U_i W_i \right) dx \\[2pt]
		\leq &\,\int_{\G} K\frac{\pdr U_i}{\pdr \nu}\, U_i \, dx + \int_\gw \left(K a(u + u_i)U_i^2  -\beta (u +  u_i)U_i V_i - K b\,(u^2 + u u_i + u_i^2) U_i^2 \right) dx \\
		+ &\, \left(K^2 + \frac{1}{2r} (q - K)^2\right) \|U_i (t)\|^2 + \frac{1}{4}\|V_i (t)\|^2 + \frac{r}{2}\|W_i (t)\|^2, \quad t > 0.
		\end{split}
\eeq
By the the boundary coupling condition \eqref{nbc}, the boundary integral in \eqref{eG} yields
\beq \bl{bdt}
	\begin{split}
	&\int_{\G} K\frac{\pdr U_i}{\pdr \nu}\, U_i \, dx = K\int_{\G}\, \sum_{i = 1}^m p [(u_i - u) - (u - u_i)]\, U_i \, dx \\[3pt]
	= &\, - 2K p \int_{\G_i} U_i^2(t, x)\, dx - 2Kp \int_{\G \backslash (\G_0 \cup \G_i)} u^2(t, x)\, dx
	\end{split}
\eeq	
for $1 \leq i \leq m$. We estimate another integral term on the right-hand side of \eqref{eG}, 
\beq \bl{nlt}
	\begin{split}
		&\int_\gw \left(K a(u + u_i)U_i^2  -\beta (u +  u_i)U_i V_i - K b\,(u^2 + u u_i + u_i^2) U_i^2 \right) dx \\[3pt]
		\leq &\, \int_\gw \left(K a(u + u_i)U_i^2  - \beta (u +  u_i)U_i V_i - \frac{K b}{2}(u^2 + u_i^2) U_i^2 \right) dx \\[3pt]
		\leq &\, \int_\gw \left(K a(u + u_i)U_i^2  + 2\beta^2 (u^2 +  u_i^2)U_i^2 + \frac{1}{4} V_i^2 - \frac{K b}{2}(u^2 + u_i^2) U_i^2 \right) dx.
	\end{split}
\eeq
Now we choose the constant multiplier $K$ to be
	\beq \bl{lbd}
	K = \frac{8 \beta^2}{b} > 0.
	\eeq
Then \eqref{nlt} is reduced to
	\beq \bl{me}
	\begin{split}
		&\int_\gw \left(K a\,(u + u_i)U_i^2  -\beta (u +  u_i)U_i V_i - K b\,(u^2 + u u_i + u_i^2) U_i^2 \right) dx \\[5pt]
		\leq &\, \int_\gw \left(K a(u + u_i)U_i^2 + \frac{1}{4} V_i^2 - \frac{K b}{4}(u^2 + u_i^2) U_i^2 \right) dx \\[3pt]
		= &\, \frac{1}{4} \|V_i(t)\|^2 +  \int_\gw \left( a(u + u_i) - \frac{b}{4}(u^2 + u_i^2) \right) KU_i^2\, dx \\
		= &\, \frac{1}{4} \|V_i(t)\|^2 +  \int_\gw \left[\frac{2a^2}{b} - \left(\frac{a}{b^{1/2}} - \frac{b^{1/2}}{2}\, u \right)^2 - \left(\frac{a}{b^{1/2}} - \frac{b^{1/2}}{2}\, u_i \right)^2 \right] KU_i^2\, dx \\
		\leq &\, \frac{1}{4} \|V_i(t)\|^2 + \frac{2K a^2}{b} \|U_i (t)\|^2 .
	\end{split}
	\eeq
Substitute \eqref{bdt} and \eqref{me} into \eqref{eG}. Then for $1 \leq i \leq m$ it holds that
\begin{equation} \bl{meq}
	\begin{split}
	&\frac{1}{2} \frac{d}{dt} (K \|U_i (t)\|^2 + \|V_i (t)\|^2 + \|W_i (t)\|^2) + 2Kp \int_{\G_i} U_i^2(t, x)\, dx \\
	&+ 2Kp \int_{\G \backslash  (\G_0 \cup \G_i)} u^2(t, x)\, dx + dK \, \|\nb U_i (t)\|^2 + \|V_i (t)\|^2 + r\, \|W_i (t)\|^2 \\
	\leq &\, \left(K^2 + \frac{K a^2}{b} + \frac{1}{2r} (q - K)^2\right) \|U_i (t)\|^2, \quad t > 0.
	\end{split}
\end{equation}

Step 2. By Poincar\'{e} inequality, there exist positive constants $\eta_1$ and $\eta_2$ depending only on the spatial domain $\gw$ and its dimension such that 
\beq \bl{Pcr}
	\eta_1 \|U_i (t)\|^2 \leq \|\nb U_i (t)\|^2 + \eta_2 \left(\int_\gw U_i (t, x)\, dx\right)^2, \quad 1 \leq i \leq m.
\eeq
On the other hand, Theorem \ref{Hab} with \eqref{M} and \eqref{lsp} confirm that 
\beq \bl{Lsup}
	\limsup_{t \to \infty}\, \left[\|g(t)\|^2 + \sum_{i=1}^m \|g_i(t)\|^2\right] \leq \frac{1 + m}{r^* \min \{C_1, 1\}}\left(C_2 + \frac{C_1^2}{32}\right) |\gw |.
\eeq	
Note that
$$
	 \|U_i (t)\|^2 \leq  2 (\|u(t)\|^2 + \|u_i(t)\|^2) \leq 2 \left(\|g(t)\|^2 + \sum_{i=1}^m \|g_i(t)\|^2\right).
$$
Then it follows from \eqref{Pcr} and \eqref{Lsup} that, for any given bounded set $B \subset H$ and any initial data $g^0, g_i^0 \in B$, we have
\beq \bl{Keq}
	\begin{split}
	& \frac{d}{dt} (K \|U_i (t)\|^2 + \|V_i (t)\|^2 + \|W_i (t)\|^2) + 4Kp \int_{\G_i} U_i^2(t, x)\, dx  \\[5pt]
	& + 2\,\eta_1 dK\, \|U_i (t)\|^2 + \|V_i (t)\|^2 + r \|W_i (t)\|^2 \\[3pt]
	\leq &\, 2\eta_2\, dK \left(\int_\gw U_i (t, x)\, dx\right)^2 + \left(K^2 + \frac{K a^2}{b} + \frac{1}{2r} (q - K)^2\right) \|U_i (t)\|^2  \\
	\leq &\, 2\eta_2\, dK |\gw | \|U_i (t)\|^2 + 2\left(K^2 + \frac{K a^2}{b} + \frac{1}{2r} (q - K)^2\right) \|U_i (t)\|^2. \\
	\leq &\, \frac{4(1 + m)}{r^* \min \{C_1, 1\}}\left(C_2 + \frac{C_1^2}{32}\right) |\gw | \left[\eta_2\, dK |\gw | + \left(K^2 + \frac{K a^2}{b} + \frac{1}{2r} (q - K)^2\right) \right]
	\end{split}
\eeq
for $t > T_B$. The differential inequality \eqref{Keq} is written as 
\beq  \bl{Synq}
	\begin{split}
	& \frac{d}{dt} (K \|U_i (t)\|^2 + \|V_i (t)\|^2 + \|W_i (t)\|^2) + 4Kp \int_{\G_i} U_i^2(t, x)\, dx  \\[3pt]
	& + 2\,\eta_1 dK\, \|U_i (t)\|^2 + \|V_i (t)\|^2 + r \|W_i (t)\|^2 < 4KR\,|\gw |, \quad t > T_B.
	\end{split}
\eeq
The constants $K = 8\beta^2/b$ in \eqref{lbd} and $R > 0$ in \eqref{R} are independent of initial data. 

	Under the condition that the stimulation signal strength of the boundary coupling $p\int_{\G_i} U_i^2(t, x)\, dx, 1 \leq i \leq m,$ satisfies \eqref{SC}, there exists a sufficiently large $\tau (g^0, g_i^0) > 0$ depending on the initial data such that the following two inequalities are satisfied:
	\beq \bl{QQ}
		\|g (\tau, g^0)\|^2 \leq Q, \quad \|g_i (\tau, g_i^0)\|^2 \leq Q, \;\; 1 \leq i \leq m,
	\eeq
where the constant $Q$ is in \eqref{abs}, and the threshold crossing inequality
	\beq \bl{pl}
	  4Kp \int_{\G_i} U_i^2(t, x)\, dx > 4KR\,|\gw |,  \quad t > \tau.
	\eeq
The inequality \eqref{pl} signifies that the boundary coupling effect $p \int_{\G_i} U_i^2(t, x)\, dx$ exceeds the synchronization threshold $R |\gw|$. Therefore, from \eqref{Synq} we have the differential inequalities: For $i = 1, \cdots, m$,
\beq \bl{Gwq}
	\begin{split}
	& \frac{d}{dt} (K \|U_i (t)\|^2 + \|V_i (t)\|^2 + \|W_i (t)\|^2) \\[5pt]
	&+ \min \{2\eta_1 d, 1, r\} (K\, \|U_i (t)\|^2 + \|V_i (t)\|^2 + \|W_i (t)\|^2) \\
	\leq & \frac{d}{dt} (K \|U_i (t)\|^2 + \|V_i (t)\|^2 + \|W_i (t)\|^2) \\[5pt]
	&+ 2\,\eta_1 dK\, \|U_i (\tau)\|^2 + \|V_i (\tau)\|^2 + r \|W_i (\tau)\|^2 < 0, \quad t > \tau.
	\end{split}
\eeq
Finally we apply the Gronwall inequality to \eqref{Gwq} and reach the conclusion that for all $i = 1, \cdots, m$,
	\beq \bl{dsyn}
	\begin{split}
		&K \|U_i (t)\|^2 + \|V_i (t)\|^2 + \|W_i (t)\|^2   \\[5pt]
		\leq &\, e^{- \mu (t - \tau)} (K \|U_i (\tau)\|^2 + \|V_i (\tau)\|^2 + \|W_i (\tau)\|^2) \\[5pt]
		\leq &\, 2e^{- \mu (t - \tau)} \max \{K, 1\} Q \to 0, \quad \text{as} \;\; t \to \infty,
	\end{split}
	\eeq
where $\mu = \min \{2\eta_1 d, 1, r\}$ is the uniform rate. Thus for any $j, k = 1, \cdots, m$, we have
\beq \bl{gjk}
	\begin{split}
	&\sup_{g_j^0, g_k^0 \in H} \left\{ \limsup_{t \to \infty} \|g_j (t, g_j^0) - g_k (t, g_k^0)\|_H \right\} \\
	\leq &\,\sup_{g_j^0, g^0 \in H} \left\{ \limsup_{t \to \infty} \|g_j (t, g_j^0) - g (t, g^0)\|_H \right\} \\
	+ &\,\sup_{g_k^0, g^0 \in H} \left\{ \limsup_{t \to \infty} \|g (t, g^0) - g_k (t, g_k^0)\|_H \right\} \to 0, \quad \text{as}\;\; t \to \infty.
	\end{split}
\eeq
Therefore it is proved that
	$$
	deg_s (\text{H})= \sum_{j = 0}^m \sum_{k=0}^m \, \sup_{g_j^0, g_k^0 \in L^2(\gw, \mR^3)} \, \left\{\limsup_{t \to \infty} \|g_j (t) -g_k(t)\|_{L^2(\gw, \mR^3)}\right\} = 0.
	$$
Here $g_0 (t, g_0^0) = g(t, g^0)$ for $i = 0$. It shows that the boundary coupled Hindmarsh-Rose neuron network generated by \eqref{pb} is asymptotically synchronized in the space $H = L^2 (\gw, \mR^3)$ at a uniform exponential rate. The proof is completed.
\end{proof}

Remark 1.
The presentation of this paper shows the asymptotic synchronization of a boundary coupled Hindmarsh-Rose neuron network locally of multiple neurons around a central neuron. This theory can be directly extended to a large-scale neuron network in the sense that each involved neuron is viewed as a central neuron with its own neighbor neurons. 

Remark 2.
Although there are mathematical models and many studies of neuron network in terms of ordinary differential equations, biologically the partly diffusive partial-ordinary differential equations will be more realistic for modeling the dynamics of neuron network because the neuron coupling and neuronal signal transmission usually take place on the boundary of the cell domain through bio-electrical potential stimulation signals which is related only to the first component $u$-equations. 

The main theorem in this paper provides a sufficient condition for realization of the asymptotic synchronization of this kind boundary coupled neuron network. The threshold for triggering the synchronization may possibly be reduced through further investigations.

\bibliographystyle{amsplain}

\end{document}